\theoremstyle{definition} 
\newtheorem{theorem}{Theorem}[section]
\newtheorem{proposition}{Proposition}[theorem]
\newtheorem{lemma}[theorem]{Lemma}
\newtheorem{example}[theorem]{Example}
\theoremstyle{definition}
\theoremstyle{remark}
\newtheorem*{remark}{Remark}
\newcommand{\eb}{$e$-power $b$-happy number}
\title{A tree approach to the happy function}
 \author{Eva G. Goedhart}
 \address[E.~G. Goedhart]{Department of Mathematics\\Bryn Mawr College\\
Bryn Mawr, PA 19101, USA} 
 \email{egoedhart@brynmawr.edu}
\author{Yusuf Gurtas}
\address[Y. Gurtas]{Department of Mathematics and Computer Science\\ Queensborough Community College, CUNY\\ 222-05 56th Avenue Bayside, NY 11364, USA}
\email{YGurtas@qcc.cuny.edu}
 \author{Pamela E. Harris}
 \address[P.~E.~Harris]{Department of Mathematical Sciences\\University of Wisconsin, Milwaukee\\
3200 N. Cramer Street\\
Milwaukee, WI 53227, USA} 
 \email{peharris@uwm.edu}
\date{\today}
\begin{document}

\maketitle

\begin{abstract}
    In this article, we present a method to construct $e$-power $b$-happy numbers of any height. 
    Using this method, we construct a tree that encodes these happy numbers, their heights, and their ancestry--relation to other happy numbers.
    For fixed power $e$ and base $b$, we consider happy numbers with at most $k$ digits and we  give a formula for the cardinality of the preimage of a single iteration of the happy function. 
    We show that these happy numbers arise naturally as children of a given vertex in the tree. 
    We conclude by applying this technique to $e$-power $b$-unhappy numbers of a given height.
\end{abstract}

\section{Introduction}

In 1994, Richard Guy \cite{guy} posed several questions about happy numbers. Happy number are integers that are defined in terms an operation on integers that, after repeated iterations of this operation, reaches one.
This operation can be expressed as a function as follows. Take a positive integer $n$ in decimal digit expansion as $n=\sum_{i=0}^m d_i(10)^i$ where $0\leq d_i\leq 9$ for all $1\leq i\leq m$, then define the function 
$S:\mathbb{Z}^+\to\mathbb{Z}^+$ by 
\begin{align}
\label{def:happy function}
S(n)=\sum_{i=0}^md_i^2,
\end{align}
which is referred to as the \emph{happy function}.
For a positive integer $k$, we write the  $k$-th iteration of this function as $S^k$ and let $S^1=S$. 
Then a number $n$ is said to be a \emph{happy number} if there exists a positive integer $\ell$ such that $S^\ell(n)=1$.  
When $n>1$, the smallest positive integer $\ell$ such that $S^\ell(n)=1$ is referred to as the \emph{height} of $n$. The height of $1$ is defined to be $0$, as defined in \cite{heights}.

Much work has been done to generalize and study happy numbers, for a survey see \cite{GHS2022}. 
This includes consideration of different positional bases including negative bases \cite{negbases, heights,fifth}. 
For a positive integer $n$ expressed in base $b$ as $n=\sum_{i=0}^m d_ib^i$, where $0\leq d_i\leq b-1$ for all $1\leq i\leq m$,  define the $e$-power, base $b$ happy function 
$S_{e,b}:\mathbb{Z}^+\to\mathbb{Z}^+$ by 
\begin{align}
\label{def:happy function gen}
S_{e,b}(n)=\sum_{i=0}^md_i^e.
\end{align}
Following the conventions of \cite{genhappy}, if there exists a positive integer $\ell$ such that $S_{e,b}^\ell(n)=1$, then $n$ is an \textit{$e$-power $b$-happy number}.
Known results in this area determine when positive integers are $e$-power $b$-happy and whether there exists arbitrarily long sequences of consecutive such happy numbers \cite{negbases,consec}. 
Further work has considered non-positional base representations of positive integers, such as the work of Carlson, Goedhart, and Harris utilizing factoradic base \cite{factoradic}, and the work of Trevi\~{n}o and Zhylinski utilizing fractional base representations \cite{trevino}. 

Another avenue of study of happy numbers stem from one of the questions posed by
Guy \cite[problem E34]{guy}, which can be rephrased as: {What is the smallest positive integer with a given height?}
In the case of positional number systems with base $b>1$, the smallest $e$-power $b$-happy numbers were provided by Grundman and Teeple in \cite{genhappy,heights,consec}. Additional results related to the height of happy numbers in positional bases can be found in \cite{onheights}.

For positive integers $e>1$ and $b>2$, there exist \emph{unhappy} numbers. That means that for some unhappy number $n$ there are  no values of $\ell$ satisfying $S_{e,b}^\ell(n)=1$.
Iteratively applying the happy function $S_{e,b}$ to an unhappy number $n$ is
eventually periodic but not equal to $1$. 
The sequence of numbers in the period of the unhappy number form what we call a \textit{cycle} of positive integers. 
Note that there may be more than one cycle, but every unhappy number reaches exactly one cycle. 
For an unhappy number, we will use the word \emph{height} to denote the smallest number of iterations of the happy function $S_{e,b}$ required to obtain one of the numbers in the cycle.

In this article, we present a graph theoretical approach
to study the iterations of the happy function, $S_{e,b}$ for any positive integers $e$ and $b$.
This perspective allows us to 
take advantage of the structure of trees and graphs with only one cycle to reveal some intrinsic properties that the function $S_{e,b}$ possesses. 
\begin{remark}
    Given that we work in a variety of bases, whenever we write an integer $x$ using a base  $b\neq 10$, we denote it by $x=\sum_{i=0}^{k}d_ib^i=(d_k,\cdots, d_1,d_0)_b$, where $0\leq d_i\leq b-1$ are the digits in base $b$.
\end{remark}

From those results, we give a formula for the number of children of a vertex which corresponds to the cardinality of the preimage of a single iteration of the happy function. 
For example, as we see in Proposition \ref{prop:T23} the number of children of a  $2$-power $3$-happy number $m$ with $k$ digits is
     \begin{align} \label{eq:ternary}
        \sum_{i=0}^{\lfloor m / 4 \rfloor}\binom{k}{i}\binom{k-i}{m-4i}.
   \end{align}  
The index $i$ in the above equation corresponds to the number of twos in the base $3$ representation of $m$ and $m-4i$ is the number of ones. 
Note that $\lfloor m / 4 \rfloor$ is the maximum number of twos allowed in the base $3$ representation of $m$.
The sum in equation \ref{eq:ternary} is specialization 
of our main result Theorem \ref{thm:children}, which gives the analogous count for all bases $b>1$ and exponents $e>1$.

This article is organized as follows. In Section \ref{sec:baseb}, we present an method that generates happy numbers of arbitrary height.
In Section \ref{sec:treeapproach}, we introduce the tree approach which allows us to visualize happy numbers as vertices in a tree.
In Section \ref{sec:cyclicnumbers}, we extend the tree approach to graphs with one cycle. This way we can visualize the unhappy numbers as vertices in a graph as well.

\section{A method to generate happy numbers of a given height}\label{sec:baseb}
In this section, we describe a process for generating happy numbers of a given height.
This process is one way to generate such numbers, and we remark that it is neither the only way nor the most optimal/efficient. Meaning, that the integers we create are often not the smallest integers that are $e$-power $b$-happy of a given height, nor is this process the fastest way to create integers with these properties.

To begin we utilize the greedy algorithm as follows.
\begin{enumerate}
    \item For a given integer $N>1$, set $N_1=N$ and find the largest positive integer $x_1$ such that $x_1(b-1)^e$ does not exceed $N_1$.
    \item Then let $N_2=N_1-x_1(b-1)^e$ and repeat the process by finding the the largest positive integer $x_2$ such that $x_2(b-2)^e$ does not exceed $N_2$.
    \item Iterate this process and output \[v=\{\underbrace{b-1,\ldots,b-1}_{x_1},\underbrace{b-2,\ldots,b-2}_{x_2},\ldots,\underbrace{1,\ldots,1}_{x_{b-1}}\},\]
    such that \[x_1(b-1)^e+x_2(b-2)^e+\cdots+x_{b-1}(1)^e=N.\]
\end{enumerate}

We now illustrate this implementation.  
\begin{example}\label{ex:1}
Let $N=586$ in base 10, choose base $b=16$, and exponent $e=2$. 
The above steps produces the set $v=\{15,15,11,3,2,1,1\}$
which satisfies
\begin{align*}
15^2+15^2+11^2+3^2+2^2+1^2+1^2=586.
\end{align*}
We use the numbers in the set $v$ to construct a base 16 number that
produces the smallest base 10 number which utilizes
the elements of $v$ as its digits.
That is, we take the numbers in $v$ as digits in base $b=16$ and we order them from smallest to largest. 
Observe that
\begin{align*}
  1\cdot (16)^6+ 1\cdot (16)^5+ 2\cdot (16)^4+3\cdot (16)^3+11\cdot (16)^2+ 15\cdot (16)^1+15\cdot (16)^0=17972223,
\end{align*}
which is $17972223=(1123BFF)_{16}$.
Note that 
\[S_{2,16}((1123BFF)_{16})=586.\]
\end{example}
We remark that in Example \ref{ex:1}, $17972223=(1123BFF)_{16}$ is not necessarily the smallest possible integer which satisfies  $S_{2,16}((1123BFF)_{16})=586$. 
A thorough search reveals that 
$24046=(5DEE)_{16}$ is the smallest possible number satisfying  $S_{2,16}((5DEE)_{16})=586$.

We now state our process formally:

\begin{center}
\textbf{Algorithm 1:} The greedy process\\
\noindent\fbox{%
\parbox{.6\textwidth}{
\begin{itemize}
\item[] \textbf{input}  $\{ N>1,b>1,e>1 $ ; integers $\}$ 
\item[]  \textbf{var} $ \{ i,j,k,t,v,w,R ;$ integers $\} $
\item[] $ v:=[\ \ ] ; R:= N $ 
\item[]  \textbf{for} j   \textbf{while}  $R>0$ \textbf{do}
\item[]   \hspace{1cm} \textbf{for} i \textbf{while} $ i^{e}\leq R $ \textbf{and} $ i<b $ \textbf{do}
\item[]  \hspace{1in} i := i +1
\item[] \hspace{1cm} $ t:=i-1 $
\item[]  \hspace{1cm} $ k:= \left \lfloor  R / t^{e} \right  \rfloor $
\item[]  \hspace{1cm} $ w:=[t \ldots t] $ \hspace{1cm}  \# length is $ k $
\item[]  \hspace{1cm} $ v:= Extend ( v,w) $
	\item[] \hspace{1cm} $ R:= R- kt^{e}$
		\item[] \hspace{1cm} $ j:= j+1$
	\item[]  \textbf{output} $ v $
\end{itemize}
}
}
\end{center}
Observe that \textbf{Algorithm 1} takes a given integer $N>1$ as the input, as well as an integer base, $b$, and exponent, $e$, and outputs a set $v=\{a_i\}$ with $0\leq a_{i}<b$ such that $N=\sum_{i=1}a_{i}^e$.
We then  
take the output $v$ of \textbf{Algorithm~1} to produce the smallest base 10 number utilizing the elements of $v$ as its digits.

Next, we describe how to utilize \textbf{Algorithm 1} to construct an $e$-power $b$-happy number of height $h>1$. 

\begin{enumerate}
    \item Fix a base $b>1$, power $e>1$, and choose height $h>1$.
    \item Let $N>1$ be a $e$-power $b$-happy number of height $1$.
    \item Apply \textbf{Algorithm 1} to $N$ with output $v$.
    \item Use $v$ to find $x$, 
    which is the smallest base 10 number that utilizes
the elements of $v$ as its digits and $S_{e,b}(x)=N$.
    \item Set $N_1=x$ and repeat the process for $N_1$ an additional $h-2$ times.
    \item Return the integer $N_{h-1}$.
\end{enumerate}

Note $N_{h-1}$ is a $e$-power $b$-happy number of height $h$. 
Moreover, one choice for an $e$-power $b$-happy number is $N=b$ in its base 10 representation. Since $N=b$ has the base $b$ representation $(1)_b$, one can see that it satisfies $S_{e,b}((1)_b)=1^e=1$.

We now provide a example.

\begin{example} 
Let $b=30$, $e=2$, and $h=3$. 
Set $N=30$ in base 10.
The process generates the sequence
$$ 1,30,965,838259,$$
implying $S_{2,30}(838259)=965$, $S_{2,30}(965)=30$, and finally $S_{2,30}(30)=1$. 
This shows that $838259$ in base 10 is a $2$-power $3$-happy number of height $3$.
\end{example}

As the previous example illustrates, the numbers generated can get big quite quickly.  
However, by trial and error, one can find $797$ in base 10, which is a $2$-power $30$-happy number of height $3$.
This shows that this method does not necessarily find the smallest happy number with a given height.

 \section{Tree Approach For Happy Numbers}\label{sec:treeapproach}
In order to visualize the sequences of integers arising from our construction of height $h$ happy numbers, we assemble the sequences described in the previous section into a tree. 

Fix base $b$ and power $e$. We construct an infinite tree (growing upward) whose root is labeled 1, 
and children are labeled with $e$-power $b$-happy numbers. 
If $n$ is an $e$-power $b$-happy number of height $h$, then $n$ is the label of a vertex which is a distance of $h$ away from the root of the tree. 
Since we are on a tree, there is a unique path connecting the vertex labeled by $n$ and the root of the tree.
We denote the infinite tree constructed in this fashion by $\mathcal{T}_{e,b}$.

For example, if $b=30$ and $e=2$, the first two levels of the tree $\mathcal{T}_{2,30}$ are illustrated in Figure \ref{fig:30}. We remark that the right most vertex labeled by ``$\cdots$'' denotes the fact that 
there are infinitely many happy numbers of every height.
This is easy to see because the happy numbers of height 1 are those that have a representation $(10)_b,(100)_b,\ldots $. 
Happy numbers with height two are the numbers that return a height one happy number upon a single application of the happy function.
Iteratively, a happy number with height $h$, is a number that returns a height $h-1$ happy number upon a single application of the happy function. 

\begin{remark}
Whenever an \eb~$n$ has height $h$, we say that the vertex in $\mathcal{T}_{e,b}$ labeled by $n$ lies at level $h$. Moreover, we will abuse notation and refer to a vertex in the tree by its label $n$.
\end{remark}

\begin{figure}[h]
  \centering
\includegraphics[width=16cm]{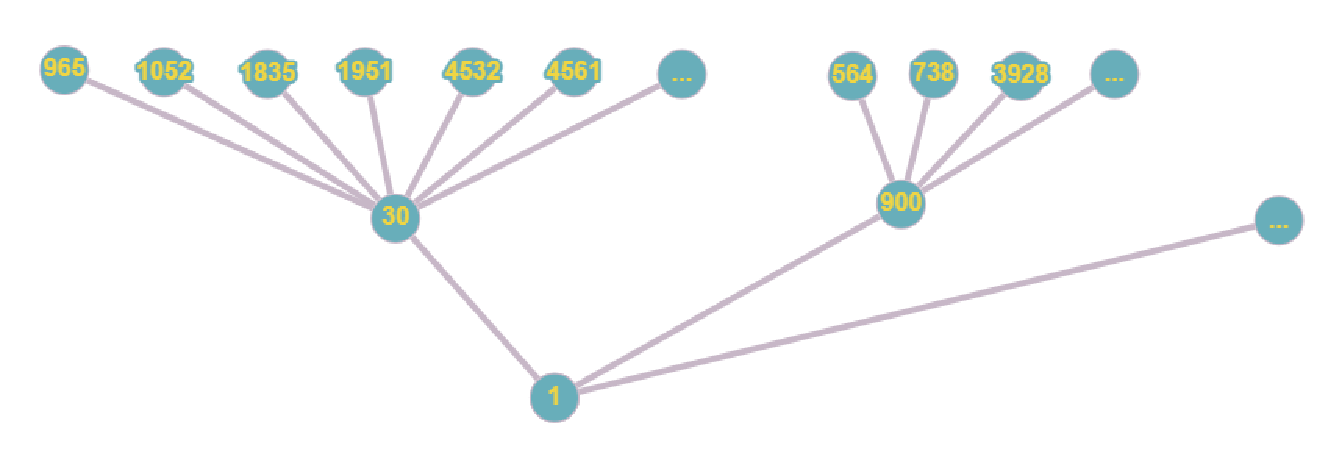}
 \caption{The first two levels of the tree $\mathcal{T}_{2,30}$.}  \label{fig:30}
\end{figure}

\begin{example}
The following sequence illustrates a $2$-power $30$-happy number of height 61, which utilizes relatively small values:

\noindent\begin{align} \label{seq:2005}
&1, 30, 965, 797, 356, 490, 637, 441, 630, 985, 389, 520, 554, 173, 73, 243, 1241, 629, 85,\\\nonumber
&272, 136, 306, 1037, 884, 850, 821, 445, 81, 9, 962, 881, 505, 261, 976, 740, 676, 26, 905, \\\nonumber& 269, 400, 20, 122, 41, 125, 305, 137, 1090, 829, 820, 386, 169, 365, 433, 522, 291, 1301,\\\nonumber& 776, 326, 1097, 509, 665, 2005.
\end{align}
Hence, $S_{30,2}^{61}(2005)=1$. 
The sequence \eqref{seq:2005} indicates that 2005 is a vertex in $\mathcal{T}_{2,30}$ at level 61. 
\end{example}

We make the following observations: 
\begin{enumerate}
    \item Given that we may introduce any number of zeros as digits, there are infinitely \eb s of a fixed height.
    \item The tree $\mathcal{T}_{e,b}$ is not an $m$-ary tree for any positive integer $m$. 
    \item The tree $\mathcal{T}_{e,b}$ has no leaves. 
\end{enumerate}

We order the (infinite list of) children of each vertex 
in increasing order from left to right according to their base 10 representation.
Then, based on this ordering, label the edges connecting the parent to the child using the set of natural numbers in order.
For example, in Figure \ref{fig:30} the label of 965 is 1 and the label of 1052 is 2.

We define the \emph{value of a vertex} 
as the sum of the labels of the edges on  the unique path connecting it to the root. 
For example, extending Figure \ref{fig:30}, the value of the vertex 2005 is 113, which is obtained from the sum of the edges labels traversed from the vertex 2005 to the root of the tree. These path labels are: 
\begin{align*}
1,1,2,1,2,2,1,2,1,1,2,3,1,1,2,1,1,1,4,1,2,3,4,4,5,1,1,1,3,\\2,1,1,3,2,3,1,3,1,2,1,2,1,1,3,1,3,2,2,2,1,3,3,2,1,3,1,1,2,1,2,1.
\end{align*}
The sequence of edge labels above is the unique \emph{address} of the vertex 2005 within the tree. 
The first 1 corresponds to the vertex labeled 30, the second 1 corresponds to the vertex labeled 965, and so on.

Note that the length of an address uniquely identifies the height of the happy number, yet it does not identify the number itself. 
We wonder if the value of a vertex can help identify the address of small happy numbers of a given height. 
 
 \subsection{Finite trees}
 As we saw in the previous section, the infinitude of happy numbers of a given height yields too large of playground. 
 Hence we fix the number of digits allowed in a base $b$ representation of \eb s to prune the infinite tree down to a finite tree. 
 Throughout, we let $k\geq 1$ denote the maximum number of digits, and we let $\mathcal{T}_{e,b}^k$ denote the finite tree arising from pruning $\mathcal{T}_{e,b}$ to only those numbers with at most $k$ digits in their base $b$ representation.

\subsubsection{Binary}
 If the base $b=2$, then every number is happy for all powers $e>1$. 
 If we restrict to numbers with at most $k$ digits, then we get a tree of happy numbers with $2^{k}-1$ vertices. 
 Figure \ref{fig:2-5} illustrates the finite tree $\mathcal{T}_{e,2}^5$ which has 31 vertices and three levels.

 \begin{proposition}
 Fix $k>1$ and consider the vertex labeled $m$ in the finite tree $\mathcal{T}_{e,2}^k$.
 \begin{enumerate}
     \item If $m=1$, then $m$ has $k-1$ children. 
     \item If $1<m\le k$, then $m$ has $\binom{k}{m}$ children.
     \item If $m>k$, then $m$ has no children.
 \end{enumerate}
 \end{proposition}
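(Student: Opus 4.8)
The plan is to reduce the entire statement to a single elementary observation about base $2$, after which all three cases become a count of binary strings by their number of ones.

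First I would record that when $b=2$ every digit $d_i$ lies in $\{0,1\}$, so $d_i^e=d_i$ for every exponent $e>1$. Hence $S_{e,2}(n)=\sum_{i=0}^m d_i$ is simply the number of $1$'s in the base-$2$ expansion of $n$; in particular $S_{e,2}$ is independent of $e$, and on any vertex of $\mathcal{T}_{e,2}^k$ it takes a value in $\{1,\dots,k\}$ (at least $1$ since $n\ge 1$, at most $k$ since there are only $k$ available bit positions).

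Next I would invoke the general principle governing the tree: because the parent of any non-root vertex $n$ is $S_{e,2}(n)$, the children of a vertex $m$ are exactly the vertices $n$ of $\mathcal{T}_{e,2}^k$ with $S_{e,2}(n)=m$, with the one caveat that the root $1$ is never counted as its own child. Here I would note that $1$ is the unique fixed point, since $S_{e,2}(n)=n$ forces the number of $1$'s in $n$ to equal $n$, which holds only for $n=1$. I would also point out that no separate height bookkeeping is needed: if $S_{e,2}(n)=m$ with $n\neq m$, then since $S_{e,2}^{\ell}(n)=1$ if and only if $S_{e,2}^{\ell-1}(m)=1$, the height of $n$ is exactly one more than the height of $m$, so every such $n$ genuinely sits one level above $m$.

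With these reductions in hand, each case is immediate. For $1<m\le k$, the preimage consists of the $\binom{k}{m}$ ways to place $m$ ones among the $k$ bit positions, and none of these equals $m$ (the number of $1$'s in $m$ is strictly less than $m$ once $m>1$), giving $\binom{k}{m}$ children. For $m=1$, the preimage is the set of powers of two $2^0,\dots,2^{k-1}$, namely $\binom{k}{1}=k$ elements; removing the root $2^0=1$ leaves $k-1$ children. For $m>k$, a string of at most $k$ bits can carry at most $k$ ones, so the preimage is empty and $m$ has no children. I do not anticipate a genuine obstacle; the only point that requires care is the $m=1$ case, where one must remember to exclude the root from its own preimage so as to obtain $k-1$ rather than $k$.
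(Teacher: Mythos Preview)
Your proof is correct and follows essentially the same idea as the paper's: identify the children of $m$ with the $k$-bit strings having exactly $m$ ones. Your treatment is in fact tidier---you make the reduction $d_i^e=d_i$ explicit, you handle the fixed-point exclusion at $m=1$ carefully, and for $m>k$ you use a direct pigeonhole argument (at most $k$ ones fit in $k$ bits) in place of the paper's appeal to the greedy algorithm.
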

 \begin{proof}
 In base 2, all numbers with a single nonzero digit are happy and have height 1. 
 Limiting to those with at most $k$ digits ensures that the only happy numbers with height 1 are: $(10)_2,\ldots, (1\underbrace{00\cdots0}_{k-1})_2$. There are $k-1$ such numbers. Thus the root has exactly $k-1$ children.

 The children of a vertex $1<m\le k$ are the numbers represented by bit strings of length $k$ with exactly $m$ bits equal to 1. There are $\binom{k}{m}$ many options for these bits, which establishes that there are $\binom{k}{m}$ children.

Assume $m>k$. Using the greedy algorithm: setting $N_1=m$ we find the largest positive integer $x_1$ such
that $x_1(b-1)^e=x_1(2-1)^e=x_1$ does not exceed 
$m$. This implies that $x_1=m$ and 
Algorithm 1 outputs $v=\{\underbrace{1,1,\ldots,1}_{m}\}$.
Then $(\underbrace{1,\ldots,1}_{m})_2$ which has $m$ digits, and since $m>k$, this will not be a child of the vertex labeled $m$ in $\mathcal{T}_{e,2}^k$.
 \end{proof}

\begin{figure}[h]
  \centering
  \includegraphics[width=16cm]{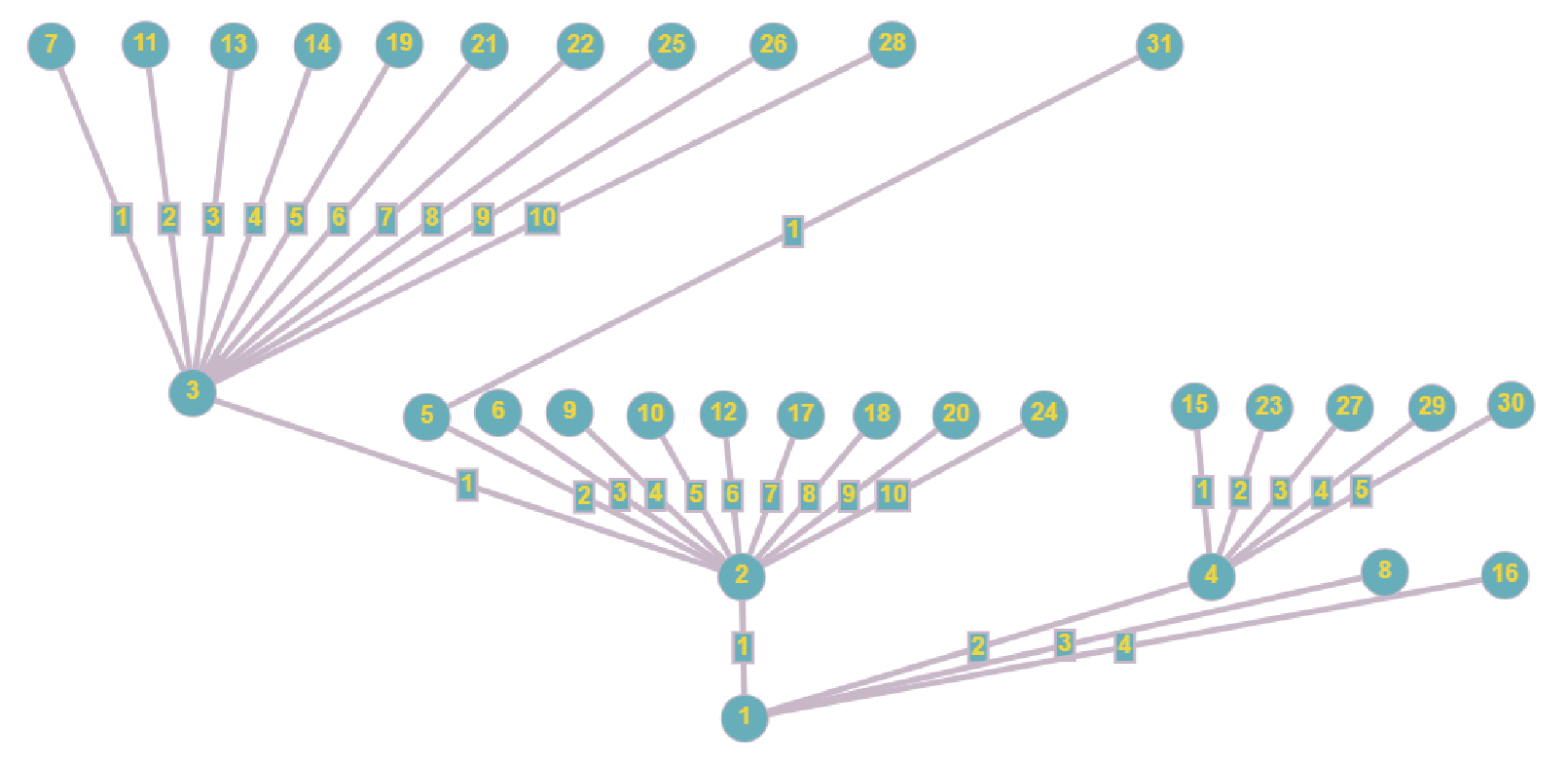}
  \caption{The three levels of the finite tree $\mathcal{T}_{e,2}^5$.}  \label{fig:2-5}
\end{figure}

\begin{figure}[h]
  \centering
  \includegraphics[width=15cm]{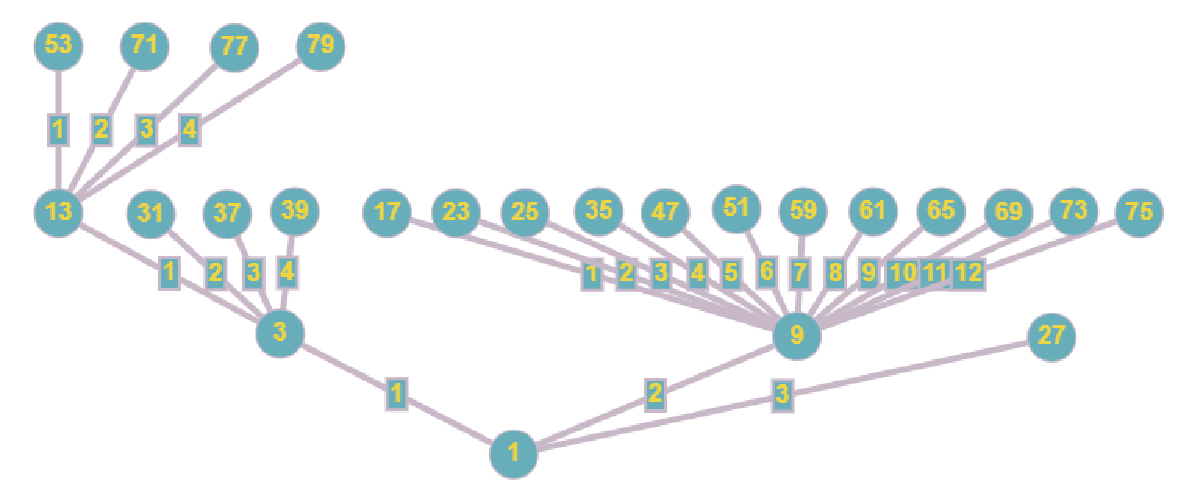}
  \caption{The three levels of the finite tree $\mathcal{T}_{2,3}^4$.}  \label{fig:3-2-4}
\end{figure}

\subsubsection{Ternary} Fix the base $b=3$. 

\begin{lemma}
Given $k>2$ and $e>1$, the vertex labeled $3$ in $\mathcal{T}_{e,3}^k$ has $\binom{k}{3}$ children.
\end{lemma}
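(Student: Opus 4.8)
The plan is to identify the children of the vertex labeled $3$ with the integers of at most $k$ base-$3$ digits that map to $3$ under a single application of $S_{e,3}$, and then to count this set directly. Recall that $3 = (10)_3$ is an $e$-power $3$-happy number of height $1$, so it sits at level $1$ of $\mathcal{T}_{e,3}^k$. Its children are precisely the level-$2$ vertices, that is, the integers $m$ with at most $k$ digits in base $3$ for which $S_{e,3}(m) = 3$; any such $m$ automatically has height $2$, since $S_{e,3}(m) = 3 \neq 1$ while $S_{e,3}^2(m) = S_{e,3}(3) = 1$. Thus the number of children equals the cardinality of this restricted preimage.

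The key observation is that the exponent $e > 1$ eliminates the digit $2$ entirely. Writing $m = \sum_i d_i 3^i$ with $d_i \in \{0,1,2\}$, each digit contributes $d_i^e \in \{0, 1, 2^e\}$ to the sum $S_{e,3}(m)$. Since $e > 1$, we have $2^e \geq 4 > 3$, so a single digit equal to $2$ already forces $S_{e,3}(m) \geq 4$, overshooting the target. Hence every child of $3$ must have all of its base-$3$ digits in $\{0,1\}$.

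Once the digits are restricted to $\{0,1\}$, the sum $S_{e,3}(m)$ collapses to the number of digits equal to $1$. Requiring $S_{e,3}(m) = 3$ therefore amounts to choosing exactly three of the available digit positions to be $1$, with the rest $0$. Since we allow at most $k$ digits, there are $k$ positions to choose from, giving $\binom{k}{3}$ placements of the three ones; the hypothesis $k > 2$ ensures this count is positive, and distinct choices of positions yield distinct integers by the uniqueness of base-$3$ representations. Assembling these steps gives the claimed count of $\binom{k}{3}$.

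I expect the argument to be essentially free of obstacles; the only point requiring care is the inequality $2^e \geq 4 > 3$ that rules out the digit $2$, together with the bookkeeping that ``at most $k$ digits'' should be read as $k$ admissible positions rather than as a direct constraint on the integers themselves.
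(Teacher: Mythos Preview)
Your proof is correct and follows essentially the same approach as the paper's: both arguments rule out the digit $2$ via the inequality $2^e\geq 4>3$ (the paper phrases this through the greedy algorithm, noting that no positive multiple of $2^e$ fits under $3$), then observe that the remaining $\{0,1\}$-strings with exactly three ones are counted by $\binom{k}{3}$. Your explicit verification that such an $m$ has height exactly $2$ is a nice touch the paper leaves implicit.
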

\begin{proof}
    Using the greedy algorithm: setting $N_1=3$ we find the largest positive integer $x_1$ such
that $x_1(b-1)^e=x_1(3-1)^e=x_1\cdot 2^e$ does not exceed 
$3$. 
This implies that $x_1=0$ for all $e>1$. 
This implies that any child of 3 has no twos in any of its digits.
Thus they must only have zeros and ones as their digits. 
Since $3=(10)_3$ is an $e$-power $3$-happy number for all $e>1$, and a child of 3 must return to 3 upon one application of the happy function, it must be that any child of 3 can only have three ones in their ternary expansion.
Since these children must have exactly 3 digits equal to 1 and can have at most $k$ digits, there are $\binom{k}{3}$ many children.
\end{proof}

See Figure \ref{fig:3-2-4} for an illustration of the finite tree $\mathcal{T}_{2,3}^4$. 
Next we fix $e=2$ and give a count for the number of children of vertices in $\mathcal{T}_{2,3}^k$.
\begin{proposition}\label{prop:T23}
 
   The vertex $m>3$ in $\mathcal{T}_{2,3}^k$ has 
   \begin{align} \label{number-of-children-happy-m=4u+v}
\sum_{j=0}^{\lfloor\frac{m}{4}\rfloor}\binom{k}{j}\binom{k-j}{m-4j}
   \end{align}  
   children.
\end{proposition}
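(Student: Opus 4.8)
The plan is to count the children of $m$ directly, by characterizing them as base-$3$ numbers with a prescribed digit content. By the construction of the tree, the children of the vertex $m$ are exactly the positive integers $n$ whose base-$3$ representation uses at most $k$ digits and which satisfy $S_{2,3}(n)=m$. Since every base-$3$ digit lies in $\{0,1,2\}$ and $e=2$, each digit of $n$ contributes $0^2=0$, $1^2=1$, or $2^2=4$ to the value $S_{2,3}(n)$. Hence, if $n$ has exactly $j$ digits equal to $2$ and exactly $\ell$ digits equal to $1$ (all remaining digits being $0$), then $S_{2,3}(n)=4j+\ell$, and the condition $S_{2,3}(n)=m$ reduces to the single linear constraint $4j+\ell=m$, equivalently $\ell=m-4j$.

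First I would fix the feasible range of $j$: nonnegativity of the digit counts forces $j\geq 0$ and $m-4j=\ell\geq 0$, giving $0\le j\le\lfloor m/4\rfloor$, which is precisely the range of summation. For each such $j$ I would count the integers with at most $k$ digits having exactly $j$ twos and exactly $m-4j$ ones. Padding each such $n$ with leading zeros to exactly $k$ positions gives a bijection between these integers and the arrangements of $j$ twos, $m-4j$ ones, and $k-j-(m-4j)$ zeros into $k$ ordered slots; since $m\geq 4>0$ forces at least one nonzero digit, every arrangement corresponds to a genuine positive integer, and distinct arrangements yield distinct integers. Choosing the positions of the twos and then of the ones gives $\binom{k}{j}\binom{k-j}{m-4j}$ arrangements, and summing over the feasible $j$ produces the claimed total $\sum_{j=0}^{\lfloor m/4\rfloor}\binom{k}{j}\binom{k-j}{m-4j}$.

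Two points need care. The requirement that $n$ have at most $k$ digits is equivalent to $j+(m-4j)\le k$; when this fails one has $m-4j>k-j$, so the factor $\binom{k-j}{m-4j}$ vanishes and the sum automatically discards the infeasible terms without separate bookkeeping. The second—and the main point where the hypothesis $m>3$ enters—is to confirm that every counted $n$ is genuinely a child of $m$, that is, a happy vertex of $\mathcal{T}_{2,3}^k$ whose parent is $m$. Since $m>1$ is a vertex of the (happy) tree, it is a happy number of some finite height $h$; then any $n$ with $S_{2,3}(n)=m$ is happy of height exactly $h+1$ (using $m\neq 1$ to rule out a shorter descent) and has $m$ as its unique parent, so it is indeed a child. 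Moreover $m$ itself is never among the counted integers, since $n=m$ would force $S_{2,3}(m)=m$, making $m$ a fixed point and hence unhappy, contradicting its membership in the tree.

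I expect this last verification, rather than the enumeration, to be the substantive step: the combinatorics collapses cleanly into the two-step binomial count, but one must argue that the full $S_{2,3}$-preimage of $m$ inside the $k$-digit window consists entirely of bona fide tree vertices with no exceptional element to remove. This is in contrast to the root case $m=1$, where the integer $1$ does satisfy $S_{2,3}(1)=1$ and must be excluded; combined with the facts that $m=2$ is unhappy and that $m=3$ is already treated in the preceding lemma (where the formula specializes to $\binom{k}{3}$), this explains why the statement is phrased for $m>3$.
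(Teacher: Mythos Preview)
Your argument is correct and follows essentially the same route as the paper: fix the number $j$ of twos, deduce the number $m-4j$ of ones, count placements by $\binom{k}{j}\binom{k-j}{m-4j}$, and sum over $0\le j\le\lfloor m/4\rfloor$. Your additional remarks on vanishing binomials and on why the hypothesis $m>3$ ensures each counted preimage is a genuine child go beyond what the paper spells out, but the core enumeration is identical.
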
 

\begin{proof}
Let $m>3$ be a vertex in $\mathcal{T}_{2,3}^k$. We  want to enumerate the children of $m$. 
Equivalently, we can construct all positive integers whose ternary representation has at most $k$ digits, and satisfy that a single application of the happy function returns $m$.
Namely, $x=(d_{k-1},d_{k-2},\ldots,d_0)$ is a child of $m$ if and only if all $0\leq d_i\leq 2$ and $\sum_{i=0}^{k-1}d_i^2=m$.
Let $j$ denote the number of digits in $x$ that are equal to 2. 
Note ${0\leq j\leq \lfloor\frac{m}{4}\rfloor}$, where $\lfloor\frac{m}{4}\rfloor$ is the largest integer such that $2^2\lfloor\frac{m}{4}\rfloor\leq m$.
Then there must be $m-j\cdot (2)^2=m-4j$ digits in $x$ equal to 1.
All remaining $k-(j+(m-4j))$ digits of $x$ will be zero.
This ensures that $S_{2,3}(x)=m$.

Since $x$ has at most $k$ digits then the number of ways to places the $j$ digits equal to two, and the $m-4j$ digits equal to one is $\binom{k}{j}\binom{k-j}{m-4j}$. 
Once the twos and ones have been placed the rest of the digits are zero and uniquely determined. {Taking the sum over $0\leq j\leq \lfloor\frac{m}{4}\rfloor$ yields the desired result.}
\end{proof}

\begin{proposition}\label{prop:Te3}
Fix $e>1$. The vertex $m>3$ in $\mathcal{T}_{e,3}^k$ has 
\begin{align} \label{base-3-children-of-m}
     \sum_{j=0}^{\lfloor\frac{m}{2^e}\rfloor}\binom{k}{j} \binom{k-j}{m-j\cdot2^e}
\end{align}  
   children.
\end{proposition}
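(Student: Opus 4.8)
The plan is to generalize the counting argument from Proposition \ref{prop:T23} by replacing the specific exponent $e=2$ with an arbitrary fixed $e>1$, which only changes the value contributed by each digit equal to $2$ from $2^2=4$ to $2^e$. First I would recall that, as established in the proof of Proposition \ref{prop:T23}, counting the children of a vertex $m$ in $\mathcal{T}_{e,3}^k$ is equivalent to counting the positive integers $x$ whose base $3$ representation has at most $k$ digits and satisfy $S_{e,3}(x)=m$. Since the only admissible digits in base $3$ are $0$, $1$, and $2$, and since $1^e=1$ while $2^e$ is the value contributed by each digit equal to $2$, the equation $S_{e,3}(x)=m$ reduces to a purely combinatorial constraint on how many digits of $x$ equal $2$ and how many equal $1$.

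Next I would let $j$ denote the number of digits of $x$ equal to $2$. Each such digit contributes $2^e$ to the happy-function value, so the $j$ twos contribute $j\cdot 2^e$ in total; the remaining value $m-j\cdot 2^e$ must be supplied by digits equal to $1$, each contributing exactly $1$. Hence there must be precisely $m-j\cdot 2^e$ digits equal to $1$, and for this to be a nonnegative count we need $j\cdot 2^e\le m$, i.e. $0\le j\le \lfloor m/2^e\rfloor$, with $\lfloor m/2^e\rfloor$ being the largest integer $j$ for which $2^e j\le m$. All remaining $k-\bigl(j+(m-j\cdot 2^e)\bigr)$ digits are forced to be $0$, so once the positions of the twos and ones are fixed the integer $x$ is uniquely determined.

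Then I would count placements: choosing which $j$ of the $k$ digit-positions hold a $2$ can be done in $\binom{k}{j}$ ways, and choosing which $m-j\cdot 2^e$ of the remaining $k-j$ positions hold a $1$ can be done in $\binom{k-j}{m-j\cdot 2^e}$ ways, after which the zeros are uniquely placed. Summing the product $\binom{k}{j}\binom{k-j}{m-j\cdot 2^e}$ over all valid $j$ in the range $0\le j\le \lfloor m/2^e\rfloor$ yields the formula \eqref{base-3-children-of-m}. The argument is essentially identical in structure to that of Proposition \ref{prop:T23}, with $4=2^2$ replaced by the general $2^e$.

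I do not anticipate a serious obstacle, since base $3$ restricts the digits to the single nonzero non-unit value $2$, which makes the contribution of each such digit constant and keeps the bookkeeping to a single summation index. The one point requiring minor care is verifying that the binomial coefficient $\binom{k-j}{m-j\cdot 2^e}$ correctly handles the boundary cases—in particular that it vanishes automatically whenever the required number of ones plus twos exceeds $k$ (so that no child with more than $k$ digits is erroneously counted), and that the upper limit $\lfloor m/2^e\rfloor$ indeed captures exactly the admissible values of $j$. These are routine consequences of the standard convention that $\binom{a}{c}=0$ when $c>a$, and require no separate case analysis.
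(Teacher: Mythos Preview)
Your proposal is correct and follows exactly the same approach as the paper: the paper's proof simply notes that the argument is analogous to that of Proposition~\ref{prop:T23} with $4$ replaced by $2^e$, and that the maximum number of twos is $\lfloor m/2^e\rfloor$. Your write-up is in fact more detailed than the paper's own proof, but the underlying idea is identical.
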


\begin{proof}
This proof is analogous to that of Proposition \ref{prop:T23}. It suffices to replace $4$ by $2^e$ because each selection of a digit being two, removes $2^e$ choices for possible digits that equal one. Note the maximum possible number of digits that can be selected as a two is given by $\lfloor\frac{m}{2^e}\rfloor$. This completes the proof.
\end{proof}

\begin{example}
Figure \ref{fig:3-3-4} illustrates the tree $\mathcal{T}_{3,3}^4$.  Note that, $63=(2100)_3=2\cdot 3^3+1\cdot 3^2$ and $9=2^3+1^3$, hence $63$ is a child of $9$. Moreover, by Proposition \ref{prop:Te3}, $m=9$ has 
\[\sum_{j=0}^{\lfloor\frac{9}{2^3}\rfloor}\binom{4}{j}\binom{4-j}{9-8j}=\binom{4}{0}\binom{4}{9}+\binom{4}{1}\binom{3}{1}=4\cdot 0+4\cdot 3=12\]
children, as depicted in Figure \ref{fig:3-3-4}.
\end{example}

\begin{figure}[h]
  \centering
  \includegraphics[width=15cm]{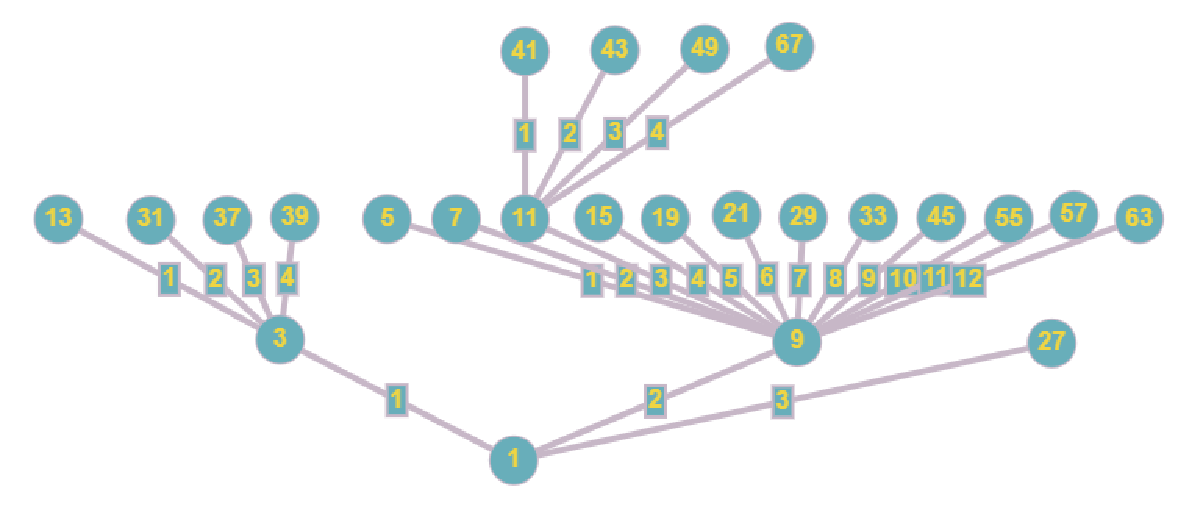}
  \caption{The three levels of $\mathcal{T}_{3,3}^4$.}  \label{fig:3-3-4}
\end{figure}

\subsubsection{General Case}
We now generalize our previous results for a general base $b>1$ and power $e>1$.

\begin{theorem}\label{thm:children}
Fix base $b>1$, power $e>1$, and maximum  number of digits to be $k$. 
For $1\leq i\leq b-1$, let 
$j_i$ denote the number of digits of $m$ equal to $i$. 
Furthermore, 
let $s_i=\sum_{x=i+1}^{b-1} j_x \cdot x^e$, $t_i=\sum_{x=i+1}^{b-1} j_x$, and $u_i=\lfloor\frac{m-s_i}{i^e}\rfloor$.

Then the number of children of $m$ in $\mathcal{T}_{e,b}^k$ is 
\[\sum_{j_2=0}^{u_2}\sum_{j_3=0}^{u_3}\cdots
\sum_{j_{b-2}=0}^{u_{b-2}}
\sum_{j_{b-1}=0}^{u_{b-1}}\binom{k}{j_{b-1}}\binom{k-t_{b-2}}{j_{b-2}}\binom{k-t_{b-3}}{j_{b-3}}\cdots\binom{k-t_{2}}{j_2}\binom{k-t_{1}}{m-s_1}.\]
\end{theorem}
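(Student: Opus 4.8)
The plan is to generalize the counting argument of Proposition \ref{prop:T23} from base $3$ to arbitrary base $b>1$ and exponent $e>1$. As in that proof, I would first characterize the children of $m$: a positive integer $x=(d_{k-1},\ldots,d_0)_b$ is a child of $m$ in $\mathcal{T}_{e,b}^k$ precisely when $0\le d_i\le b-1$ for all $i$ and $\sum_{i=0}^{k-1} d_i^e=m$. Thus counting children amounts to counting length-$k$ digit strings over $\{0,1,\ldots,b-1\}$ whose $e$-th powers sum to $m$, where allowing leading zeros encodes the ``at most $k$ digits'' condition.

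Next I would pass from individual digit strings to their digit-multiplicity profiles. For a prospective child, let $j_i$ record how many of its digits equal $i$, for $1\le i\le b-1$ (these are the summation variables); the number of zero digits is then $k-\sum_{i=1}^{b-1} j_i$. The power-sum constraint becomes the single linear Diophantine equation $\sum_{i=1}^{b-1} j_i\, i^e = m$, and any profile $(j_1,\ldots,j_{b-1})$ satisfying this with $\sum_i j_i\le k$ corresponds to exactly $\binom{k}{j_1,\ldots,j_{b-1},\,k-\sum_i j_i}$ distinct children, obtained by placing each block of equal digits into the $k$ available positions. So the total child count is this multinomial summed over all admissible profiles.

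The heart of the argument is to reorganize that sum into the stated iterated form. I would select the multiplicities from the largest digit downward: first $j_{b-1}$, then $j_{b-2}$, and so on. With $s_i=\sum_{x>i} j_x x^e$ the power-sum already committed by digits exceeding $i$ and $t_i=\sum_{x>i} j_x$ the number of positions already used, the admissible range for $j_i$ is exactly $0\le j_i\le u_i=\lfloor (m-s_i)/i^e\rfloor$; an easy induction shows each partial sum satisfies $s_i\le m$, so these bounds are well defined and nonnegative. Once $j_{b-1},\ldots,j_2$ are fixed, the equation forces $j_1=m-s_1$, which is why no summation over $j_1$ appears. Finally I would rewrite the multinomial as the telescoping product of binomials $\binom{k}{j_{b-1}}\binom{k-t_{b-2}}{j_{b-2}}\cdots\binom{k-t_1}{j_1}$, using $k-t_{b-2}=k-j_{b-1}$ and so on to peel off one digit value at a time, then substitute $j_1=m-s_1$ into the last factor $\binom{k-t_1}{m-s_1}$.

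The main subtlety --- and the one point I would be careful to spell out --- is the interaction with the ``at most $k$ digits'' restriction. The ranges $j_i\le u_i$ enforce the power-sum equation but do not by themselves guarantee $\sum_i j_i\le k$. This is handled automatically by the convention $\binom{n}{r}=0$ when $r<0$ or $r>n$: any profile that overflows the $k$ positions produces a vanishing factor (in particular a vanishing final binomial when $m-s_1>k-t_1$), so such terms drop out without being excluded by hand. Verifying that this convention makes the iterated sum agree exactly with the multinomial sum over genuinely admissible profiles is the key bookkeeping step, and the cases $b=3$ of Propositions \ref{prop:T23} and \ref{prop:Te3} serve as a useful sanity check.
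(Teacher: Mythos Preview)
Your proposal is correct and follows essentially the same route as the paper: both arguments parameterize children by the digit-multiplicity profile $(j_1,\ldots,j_{b-1})$, place digits from largest value to smallest so that $j_1=m-s_1$ is forced, and express the count as the product of binomials $\binom{k}{j_{b-1}}\binom{k-t_{b-2}}{j_{b-2}}\cdots\binom{k-t_1}{m-s_1}$ summed over $0\le j_i\le u_i$. Your write-up is somewhat more explicit than the paper's in two respects---you pass through the multinomial coefficient before telescoping it into binomials, and you point out that the convention $\binom{n}{r}=0$ for $r>n$ is what silently enforces $\sum_i j_i\le k$---but these are refinements of the same argument rather than a different approach.
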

\begin{proof}
The proof is analogous to the proof of Proposition \ref{prop:T23}, by first considering a choice for where to place $j_{b-1}$ digits of $b-1$, which can be done in $\binom{k}{j_{b-1}}$ ways. Then place $j_{b-2}$ digits being $b-2$, which can be done in $\binom{k-j_{b-1}}{j_{b-2}}=\binom{k-t_{b-2}}{j_{b-2}}$ ways. Continuing in this way, we place  $j_i$ digits being $i$, which can be done in $\binom{k-\sum_{x=i+1}^{b-1}j_x}{j_i}=\binom{k-t_{i}}{j_i}$ ways. 
We iterate this process down to placing $j_2$ digits that are twos, which can be done in $\binom{k-\sum_{x=3}^{b-1}j_x}{j_2}=\binom{k-t_2}{j_2}$ ways. 
Thus there must be $m-\sum_{x=2}^{b-1}j_x\cdot x^e=m-s_1$ digits being 1, which can be placed in $\binom{k-\sum_{x=2}^{b-1}j_x}{m-s_1}=\binom{k-t_1}{m-s_1}$ ways. Lastly, any remaining available digits would be uniquely determined to be zeros.

Based on placing the digits from largest to smallest, we note that, for any $1\leq i\leq b-1$, we have that $0\leq j_i\leq \lfloor\frac{m-\sum_{x=i+1}^{b-1}j_x\cdot x^e}{i^e}\rfloor=u_i$.
The result follows from taking the product of all of these choices along with the sum over all possible ranges for the $j_i$'s for all $1\leq i\leq b-1$.
\end{proof}

\section{Unhappy numbers and trees with cycles}\label{sec:cyclicnumbers}

  Recall that unhappy numbers will have a root that is not equal to $1$. That root may be more than one number and so will form a cycle in the graph. In other words, numbers that are not happy can be realized as a graph with one cycle, denoted $\mathcal{C}_{e,b}$ where $e$ is the exponent and $b$ the base as usual.  In the notation $\mathcal{C}^k_{e,b}$, the positive integer $k$ is used to indicate the number of digits in the base $b$ expansion of the numbers displayed in the graph. Additionally, $\mathcal{C}^k_{e,b}[r_0, r_1,\ldots, r_i]$ will be used to mark the cycle of the graph $\mathcal{C}^k_{e,b}$. Here $r_0, r_1,\ldots, r_i$ represent the vertices in the cycle, $r_0$ being the smallest number in base 10 representation and $r_j=S_{e,b}(r_{j-1})$ for each $1\leq j\leq i$.
  Figure \ref{fig:4-4}, at the end of this section, illustrates the graph $\mathcal{C}_{2,7}^3[2,4,16,8]$. 
 
\begin{remark}
  Recall, from Section 1, that the height of an unhappy number is the smallest number of iterations it takes to reach a vertex in the cycle.
    The procedure described in Section \ref{sec:baseb} for constructing happy numbers of a given height, is not restricted only to happy numbers. In fact, the same technique can be used to construct unhappy children of a given height. Moreover, the formula given in Theorem \ref{thm:children} extends naturally to the case where $m$ is an $e$-power $b$-unhappy number.
\end{remark} 

In the remainder of this section, we describe some ideas for additional directions and highlight how to determine the structure of the possible root cycles for unhappy number graphs. 
 
  \subsection{Cycles of length 1}
  
  If a number $X>1$ is equal to the sum of $e$ powers of its own digits in base $b$ then we have a tree with root $X$ and $X$ is not a happy number. For base $b=3$ and exponent $e=2$, there is a tree with root $5$, since $5=(12)_3$ and $1^2+2^2=5$.

  In fact for every odd base $b=2t+1$ for $t>0$ there is such a tree: Let  $X=(x_1x_0)_b$ where $ x_1=t, x_0 = t+1$ for $b=2t+1$ It follows that
  \begin{align} \label{eqn-for-cycle-of-length-1} x_0^2+x_1^2=x_1b+x_0=X
  \end{align}
  proving  that $X$ is the root of a tree for base $b=2t+1,e=2$ and $X$ is not a happy number. This is the first member of the one parameter family of examples \begin{align} \label{family-for-cycle-of-length-1}  
 x_1=t, x_0 = \ell t+1,  b=(\ell ^2+1)t+\ell    
  \end{align}
    that satisfy  (\ref{eqn-for-cycle-of-length-1}) for $\ell >0,t>0$. For $t=1,\ell =2$ and $t=3,\ell =1$ we get the following trees in Figure~\ref{fig:4-1} with $b=7$.

    \begin{figure}[h]
  \centering
  \includegraphics[width=15cm]{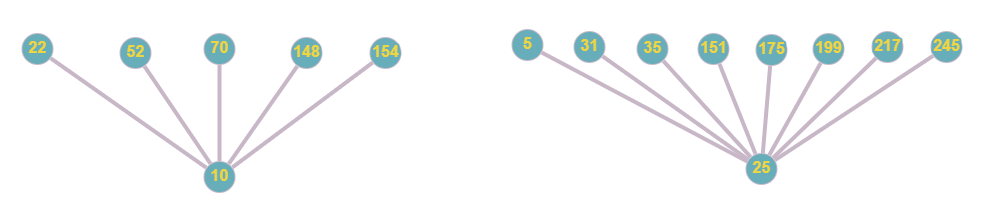}
  \caption{First level of the finite graphs $\mathcal{C}^3_{2, 7}[10]$ and $\mathcal{C}^3_{2, 7}[25]$.}  \label{fig:4-1}
\end{figure}

\subsection{Cycles of length 2}
  If the cycle length is 2 then there are two numbers in base $b$ that generate one another under the happy function. 
  \begin{example}
     In base $b=8$ and with exponent $e=2$, the numbers $13=(15)_8$ and $26=(32)_8$ generate one another because  $1^2+5^2=26$ and $3^2+2^2=13$. We see the first level of this graph in Figure~\ref{fig:4-2}.
  \end{example} 
  
  A formula that generates such pairs for base $b \equiv 3$ mod $5$ goes as follows: Let the numbers be $X$ and $Y$ with base $b$ representations $X=(x_1x_0)_b$ and  $Y=(y_1y_0)_b$. Then choose $x_1=t,x_0=3t+2$ and $y_1=2t+1,y_0=t+1$ with base $b=5t+3$ for $t\ge 0$. It follows that
  \begin{align*}
      x_1^2+x_0^2=y_1b+y_0=Y \ \ \mbox{and} \ \ y_1^2+y_0^2=x_1b+x_0=X.
  \end{align*}
Also note that $Y=2X$.

\begin{figure}[h]  
  \centering
\includegraphics[width=15cm]{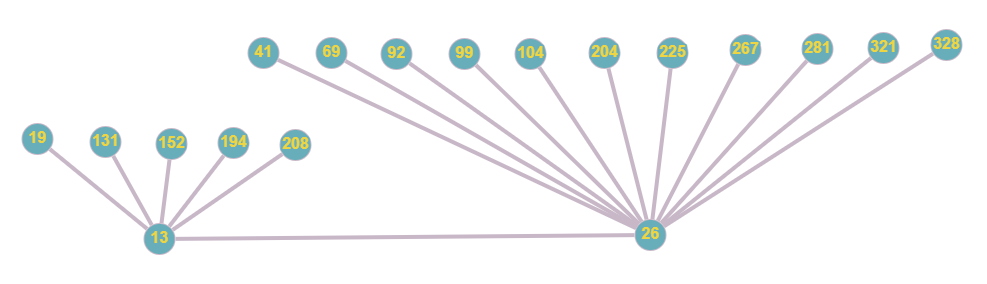}
\caption{First level of the finite graph $\mathcal{C}_{2,8}^3[13,26]$.}  \label{fig:4-2}
\end{figure}

\subsection{Cycles of length 3 or greater}
For a given base $b$ and exponent $e$, there can be more than one cycle of the same length. 
  As an example, when the base $b=11$ and $e=2$, we have root cycles  
   $[74,100,82]$ and $[5,25,13]$. The latter root cycle is seen in Figure~\ref{fig:4-3}.

\begin{figure}[h] 
\centering 
\hspace{1.5in}
\includegraphics[width=8cm]{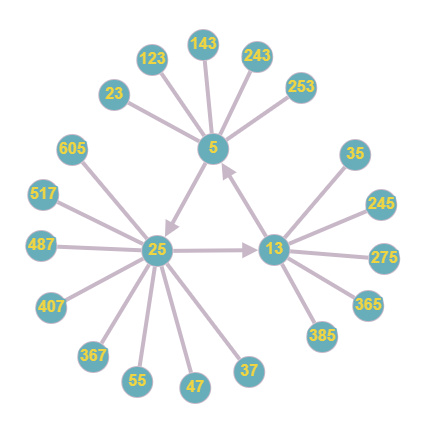}
\caption{First level of the finite graph $\mathcal{C}_{2,11}^3[5,25,13]$.}   \label{fig:4-3}
\end{figure}

In order to generate a one parameter family of such examples one has to solve the nonlinear system of equations
\begin{align*}
  \left[\begin{array}{c}
y_{0}^{2}+y_{1}^{2} 
\\
 z_{0}^{2}+z_{1}^{2} 
\\
 x_{0}^{2}+x_{1}^{2} 
\end{array}\right]=
  \left[\begin{array}{c}
b x_{1}+x_{0} 
\\
 b y_{1}+y_{0} 
\\
 b z_{1}+z_{0} 
\end{array}\right]
\end{align*}
where   $X=(x_1x_0)_b,Y=(y_1y_0)_b$ and  $Z=(z_1z_0)_b$.

 Figure \ref{fig:4-4} shows a cycle of length 4 up to height one for base 7 with power 2 using 3 digits. 

\begin{figure}[h]  
\centering 
\hspace{1.5in}
\includegraphics[width=8cm]{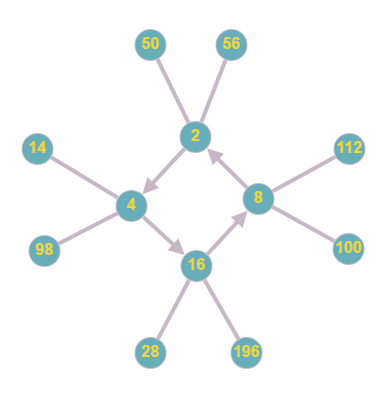}
 \caption{First level of the finite graph $\mathcal{C}_{2,7}^3[2,4,16,8]$.}  \label{fig:4-4}
\end{figure}

\section*{Acknowledgements}
The authors would like to thank the reviewers for their careful reading of our manuscript and for their comments which have helped us improve the final article. The authors thank the Department of Mathematical Sciences at UW Milwaukee for funding which made this research possible. E.~Goedhart also thanks the Association for Women in Mathematics for travel support through the Mathematical Endeavors Revitalization Program. P.~E.~Harris was supported by a Karen Uhlenbeck EDGE Fellowship.

 \end{document}